\newtheoremstyle{mystyle}
  {}
  {}
  {\itshape}
  {}
  {\bfseries}
  {.}
  { }
  {\thmname{#1}\thmnumber{ #2}\thmnote{ (#3)}}
\theoremstyle{mystyle}
\newtheorem{Thm}{Theorem}
\newtheorem{Prop}[Thm]{Proposition}
\newcommand{\Z}{\mathbb{Z}}
\title{Adjunction inequality for spatially refined $s$-invariants}
\author{Qiuyu Ren}
\address{Department of Mathematics, University of California, Berkeley, Berkeley, CA 94709, USA}
\email{qiuyu\_ren@berkeley.edu}
\begin{document}

\begin{abstract}
We note an adjunction inequality in $k\overline{\mathbb{CP}^2}$ for the $s$-version of the $Sq^1$-refinement of Rasmussen's $s$-invariant. This does not hold for general spatial refinements of $s$-invariants.
\end{abstract}

\maketitle

It was proved in \cite[Corollary~1.4]{ren2024lee} that for an oriented link cobordism $\Sigma\colon L_0\to L_1$ in some $k\overline{\mathbb{CP}^2}\backslash(int(B^4)\sqcup int(B^4))$ between two oriented links $L_0,L_1\subset S^3$ with $\pi_0(L_1)\to\pi_0(\Sigma)$ surjective, we have an adjunction-type inequality for Rasmussen's $s$-invariant \cite{rasmussen2010khovanov,beliakova2008categorification} over any coefficient field $\mathbb F$ of the form
\begin{equation}\label{eq:adjunction}
	s(L_1)\le s(L_0)-\chi(\Sigma)-[\Sigma]^2-|\Sigma|.
\end{equation}

However, Ciprian Manolescu informed us that the adjunction inequality (for knots) may fail for spatially refined $s$-invariants defined by Lipshitz-Sarkar \cite{lipshitz2014refinement}, Sarkar-Scaduto-Stoffregen \cite{sarkar2020odd} and Schuetz \cite{schuetz2022two}. The counterexample he found is the knot $9_{42}$ which bounds a disk in the punctured $\overline{\mathbb{CP}^2}$ with homology class $[\overline{\mathbb{CP}^1}]$ whose geometric intersection number with $\overline{\mathbb{CP}^1}$ is $3$ (see Figure~\ref{fig:9_42}), but which has some versions of the odd $Sq^1$-, $Sq^2$- and even $Sq^2$-refined $s$-invariants being $2$, violating \eqref{eq:adjunction}.

Let us see what goes wrong with the proof in \cite{ren2024lee} if it were applied to these refined $s$-invariants. The proof relies on the following three facts about $s$-invariants.

\begin{enumerate}[(a)]
	\item $s$ is defined for all oriented links in $S^3$. If $\Sigma\colon L_0\to L_1$ is an oriented cobordism in $S^3\times I$ with $\pi_0(L_0)\to\pi_0(\Sigma)$ surjective, then $s(L_1)-s(L_0)\ge\chi(\Sigma)$.
	\item If $T(n,n)_{p,q}$ is the torus link $T(n,n)$, equipped with an orientation where $p$ strands are oriented against the other $q$ strands, $p\ge q$, $p+q=n$, then $s(T(n,n)_{p,q})=(p-q)^2-2p+1$.
	\item $s(L\sqcup T(n,n)_{p,q})=s(L)+s(T(n,n)_{p,q})-1$ for any oriented link $L$.
\end{enumerate}

Lipshitz-Sarkar's refined $s$-invariants are only defined for knots, but it is not hard to extend the definition to links and address item (a), which we will do in a while. For this natural generalization, however, one can verify using KnotJob \cite{knotjob} that $s_+^{Sq^2}(T(3,3)_{2,1})=s(T(3,3)_{2,1})+2$, violating item (b). The same violation for $T(3,3)_{2,1}$ appears for the odd $Sq^1$-refinement and the $s$-version of one of Schuetz's two odd $Sq^2$-refinements (once appropriately defined). These contribute to the failure of the corresponding refined $s$-invariants from satisfying \eqref{eq:adjunction} for the example of $9_{42}$. 

On the positive side, we show the following.
\begin{Thm}
The adjunction inequality \eqref{eq:adjunction} holds for $s_+^{Sq^1}$, the $s$-version of the even $Sq^1$-refinement of $s$ (defined for knots in \cite{lipshitz2014refinement}, and extended to all oriented links as in Section~\ref{sbsec:s_refined_links}).
\end{Thm}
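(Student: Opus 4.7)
The strategy is to re-run the argument of \cite[Corollary~1.4]{ren2024lee} verbatim, with Rasmussen's $s$ replaced by $s_+^{Sq^1}$. Since that argument derives \eqref{eq:adjunction} purely formally from the three facts (a)--(c) listed above, it suffices to verify each of these for $s_+^{Sq^1}$.

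Property (a) has two parts. The extension of $s_+^{Sq^1}$ from knots to oriented links is carried out in Section~\ref{sbsec:s_refined_links}. The cobordism inequality in $S^3\times I$ then follows from functoriality of the $Sq^1$-refined (Lee- or Bar-Natan-deformed) Khovanov homotopy type: a cobordism $\Sigma\colon L_0\to L_1$ with $\pi_0(L_0)\twoheadrightarrow\pi_0(\Sigma)$ induces a map that shifts the quantum filtration by $\chi(\Sigma)$ and carries the distinguished $s_+^{Sq^1}$-detecting class of $L_0$ to that of $L_1$, yielding $s_+^{Sq^1}(L_1)-s_+^{Sq^1}(L_0)\ge\chi(\Sigma)$ by the same filtration bookkeeping as in the classical case. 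Property (c) follows from the fact that the Khovanov complex of $L\sqcup L'$ is the tensor product of those of $L$ and $L'$, combined with the Cartan formula for $Sq^1$ on such a tensor product; the distinguished class of the union factors as a tensor product of the distinguished classes, and the $-1$ shift arises from the same combinatorics as for $s$.

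The substantive step is property (b), the torus-link calculation $s_+^{Sq^1}(T(n,n)_{p,q})=(p-q)^2-2p+1$. The plan is to establish the stronger statement $s_+^{Sq^1}(T(n,n)_{p,q})=s(T(n,n)_{p,q})$, i.e.\ that the even $Sq^1$-refinement is trivial on these oriented torus links. The natural route exploits the identification of $Sq^1$ on the mod~$2$ (even) Khovanov theory with the Bockstein associated to $0\to\mathbb{F}_2\to\Z/4\to\mathbb{F}_2\to 0$: it then suffices to exhibit an integral lift, in the appropriate Bar-Natan deformation, of the $\mathbb{F}_2$ class whose quantum filtration computes $s(T(n,n)_{p,q})$. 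Such a lift should come directly from the orientation-induced canonical generators, which live naturally over $\Z$.

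The main obstacle is this last verification: controlling the integral refinement of the top-filtration class of $T(n,n)_{p,q}$ in sufficient generality. For small $n$ the equality $s_+^{Sq^1}=s$ is confirmed directly with KnotJob \cite{knotjob}; for general $n$, a fallback is an induction using the oriented band moves between $T(n,n)_{p,q}$ and $T(n-1,n-1)_{p',q'}$ already employed in \cite{ren2024lee}, bootstrapped by the cobordism inequality established in property (a).
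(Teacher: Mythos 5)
Your overall skeleton --- reduce \eqref{eq:adjunction} to the three facts (a)(b)(c) and verify each of them for $s_+^{Sq^1}$ --- is exactly the paper's strategy, and your treatment of (a) is essentially right. But your verification of (b) aims at the wrong condition. By the definition in Section~\ref{sbsec:s_refined_links}, $s_+^{Sq^1}(T)=s^{\mathbb F_2}(T)+2$ if and only if some lift $x$ of $[\mathfrak s_{\mathfrak o}]$ to $H^0(CKh_{Lee}^{\ge s-1})$ has $p(x)\in\mathrm{im}(Sq^1)$, so to prove $s_+^{Sq^1}(T)=s(T)$ you must show $p(x)$ is \emph{not in the image} of the Bockstein. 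Exhibiting an integral lift of the class shows at best that it lies in the \emph{kernel} of the integral Bockstein; that is a different condition and is irrelevant here. The argument you are missing is short: by \cite[Theorems~1.1 and 2.1]{ren2024lee} one has $Kh^{0,s(T)-1}(T;\Z)\cong\Z$ and $Kh^{1,s(T)-1}(T;\Z)=0$, and since the image of $Sq^1$ into a given bidegree is the mod-$2$ reduction of the subgroup of the integral Khovanov homology there annihilated by $2$, the absence of torsion forces $\mathrm{im}(Sq^1)=0$ in bidegree $(0,s(T)-1)$, whence $s_+^{Sq^1}(T)=s_+^0(T)=s(T)$. Your fallback --- KnotJob for small $n$ plus an induction bootstrapped by the cobordism inequality --- cannot close the gap, because the cobordism inequality only bounds $s_+^{Sq^1}$ from below along a cobordism while the content of (b) is an \emph{upper} bound (one always has $s_+^{Sq^1}\ge s$); you explicitly leave this, the substantive step, unproved.

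Your one-sentence treatment of (c) is also a genuine gap, and in the paper this is where most of the work lies (Proposition~\ref{prop:s_sq1_cntd_sum}). The Cartan formula $Sq^1_{L\sqcup T}=Sq^1_L\otimes1+1\otimes Sq^1_T$ is only the starting point: the set of admissible lifts of $[\mathfrak s_{\mathfrak o_L}]\otimes[\mathfrak s_{\mathfrak o_T}]$ into the filtered subcomplex of the disjoint union is strictly larger than the set of tensor products of lifts, and $\mathrm{im}(Sq^1_{L\sqcup T})$ in bidegree $(0,s(L\sqcup T)-1)$ receives cross terms from every $Kh^{-1-h,*}(L)\otimes Kh^{h,*}(T)$. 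Accordingly, the disjoint-union formula is \emph{not} unconditional for refined invariants: the paper proves it only under the hypothesis that $Sq^1$ vanishes into bidegrees $(i,s(T)-1)$ of $T$ for $i=0,1$ (verified for $T(n,n)_{p,q}$ by the torsion-freeness above), and the proof requires decomposing the filtered Bar-Natan complexes into homology plus acyclic two-step pieces and checking that all unwanted contributions to both the lift space and $\mathrm{im}(Sq^1_{L\sqcup T})$ land in a fixed complement of $Kh^{0,s(L)-1}(L;\mathbb F_2)\otimes gr^{s(T)-1}(Kh_{BN}^0(T))$. That such care is necessary is precisely the moral of the paper: the analogous "obvious" statement (b) already fails for several $Sq^2$- and odd $Sq^1$-refinements, which is how the adjunction inequality breaks for them on $9_{42}$.
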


For the proof of \eqref{eq:adjunction} in \cite{ren2024lee} to work, it suffices to justify (a)(b)(c). We address each of them in the following three sections, respectively.

\begin{figure}
    \centering
    \begin{tikzpicture}[thick,scale=0.8, every node/.style={scale=0.8}]
	\node (A) at (0,0) [draw,minimum width=60pt,minimum height=25pt,thick] {+1};
	\begin{knot}[
		clip width=8,
		clip radius=8pt,
		flip crossing/.list={1,2},
		]
		\strand [thick] (A.90) .. controls +(0,2.2) and +(1.1,0.1) .. (-1.8,3)
		 ..  controls +(-1.9,-0.1) and +(-2,0) .. (-2,-2.5)
		  .. controls +(1,0) and +(0,-1.4) .. (A.210);
		\strand [thick] (A.150) .. controls +(0,1) and +(0.6,0) .. (-1.6,1.8)
		to [out=left,in=left] (-1.6,-1.5)
    	to [out=right,in=left] (1.6,-1.5)
		to [out=right,in=right] (1.5,1.8)
		to [out=left,in=up] (A.30);
		\strand [thick] (A.330)
		.. controls +(0,-0.6) and  +(0.4,0) .. (0.4,-2.2)
		.. controls +(-0.4,0) and +(0,-0.6) .. (A.270);
	\end{knot}
    \end{tikzpicture}
    \caption{A diagram of the knot $9_{42}$. Untwisting the twist region results in the unknot.}
    \label{fig:9_42}
\end{figure}
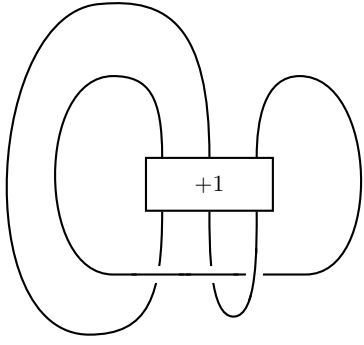

\subsection{Spatially refined \texorpdfstring{$s$}{s}-invariants for links}\label{sbsec:s_refined_links}
We will only concern ourselves with even refinements, although the odd case can be dealt with similarly. We follow \cite{lipshitz2014refinement}.

Let $\theta$ be a stable cohomology operation of degree $m$ over a field $\mathbb F$ and  $L$ be a nonempty oriented link in $S^3$ with orientation $\mathfrak o$. Let $W(L)=\mathrm{span}\{[\mathfrak s_{\mathfrak o}],[\mathfrak s_{\bar{\mathfrak o}}]\}\subset Kh_{Lee}^0(L;\mathbb F)$ (use Bar-Natan homology $Kh_{BN}$ \cite{bar2005khovanov,turner2006calculating} instead of Lee homology $Kh_{Lee}$ \cite{lee2005endomorphism} if $\mathrm{char}(\mathbb F)=2$) denote the $2$-dimensional subspace spanned by the canonical generators corresponding to the orientation of $L$ and its reverse. For an integer $q\equiv\#\pi_0(L)\pmod2$, consider the maps
\begin{equation}\label{eq:s_theta_maps}
Kh^{-m,q}(L;\mathbb F)\xrightarrow{\theta}Kh^{0,q}(L;\mathbb F)\xleftarrow{p}H^0(CKh_{Lee}^{\ge q}(L;\mathbb F))\xrightarrow{j}Kh_{Lee}^0(L;\mathbb F),
\end{equation}
where $Kh$ is the Khovanov homology \cite{khovanov2000categorification}, $CKh_{Lee}$ is the Lee chain complex, and $CKh_{Lee}^{\ge q}$ is its subcomplex consists of elements with filtered degree at least $q$.

With respect to $L$, the integer $q$ is \textit{$\theta$-half-full} (resp. \textit{$\theta$-full}) if the subspace $V^q(L):=j(p^{-1}(\mathrm{im}(\theta)))\cap W(L)$ of $W(L)$ is at least $1$-dimensional (resp. $2$-dimensional); it is \textit{half-full} (resp. \textit{full}) if the subspace $\mathrm{im}(j)\cap W(L)$ of $W(L)$ is at least $1$-dimensional (resp. $2$-dimensional). Hence
\begin{itemize}
	\item if $q$ is $\theta$-half-full (resp. $\theta$-full), then $q$ is half-full (resp. full);
	\item if $q$ is half-full (resp. full), then $q-2$ is $\theta$-half-full (resp. $\theta$-full).
\end{itemize}
Note that in this language, the classical $s$-invariant of $L$ over $\mathbb F$ is
$$s^{\mathbb F}(L)=\max\{q\colon q\text{ is half-full}\}-1=\max\{q\colon q\text{ is full}\}+1.$$

Define the $\theta$-refined $s$-invariants of $L$ by
$$r_+^\theta(L):=\max\{q\colon q\text{ is $\theta$-half-full}\}+1,\quad s_+^\theta(L):=\max\{q\colon q\text{ is $\theta$-full}\}+3.$$
Thus $r_+^\theta(L),s_+^\theta(L)\in\{s^{\mathbb F}(L),s^{\mathbb F}(L)+2\}$. If $0$ denotes the zero stable cohomology operation over $\mathbb F$, then $s^{\mathbb F}=r_+^0=s_+^0$.

For computation purpose, we note that $s_+^\theta(L)=s^{\mathbb F}(L)+2$ if and only if there exists $x\in H^0($ $CKh_{Lee}^{\ge s^{\mathbb F}(L)-1}(L;\mathbb F))$ with $j(x)=[\mathfrak s_o]$ and $p(x)\in\mathrm{im}(\theta)$. Similarly, $r_+^\theta(L)=s^{\mathbb F}(L)+2$ if and only if there exists $x\in H^0(CKh_{Lee}^{\ge s^{\mathbb F}(L)+1}(L;\mathbb F))$ with $j(x)=[\mathfrak s_o]\pm[\mathfrak s_{\bar{\mathfrak o}}]$ and $p(x)\in\mathrm{im}(\theta)$ for some sign $\pm$.

If $\Sigma\colon L_0\to L_1$ is an oriented link cobordism in $S^3\times I$ between nonempty links with $\pi_0(L_0)\to\pi_1(\Sigma)$ surjective, then $Kh_{Lee}(\Sigma)$ maps $W(L_0)$ isomorphically onto $W(L_1)$, which restricts to monomorphisms $V^q(L_0)\to V^{q+\chi(\Sigma)}(L_1)$. It follows that 
\begin{equation}\label{eq:refined_s_cobordism}
	r_+^\theta(L_1)-r_+^\theta(L_0)\ge\chi(\Sigma),\quad s_+^\theta(L_1)-s_+^\theta(L_0)\ge\chi(\Sigma).
\end{equation}
Finally, we define $r_+^\theta(\emptyset)=s_+^\theta(\emptyset)=1$. It is straightforward to check that \eqref{eq:refined_s_cobordism} still holds when $L_0,L_1$ are allowed to be the empty link.

The minus versions of the spatial refinements of $s$ are defined by $r_-^\theta(L):=-r_+^\theta(\bar L)$, $s_-^\theta(L):=-s_+^\theta(\bar L)$, where $\bar L$ denotes the mirror image of $L$. We will not be using them.

\subsection{\texorpdfstring{$Sq^1$}{Sq1}-refined \texorpdfstring{$s$}{s}-invariants for torus links}
\begin{Prop}\label{prop:s_sq1_Tnn}
	For $p\ge q$, $p+q=n$, we have $$s_+^{Sq^1}(T(n,n)_{p,q})=s^{\mathbb F_2}(T(n,n)_{p,q})=(p-q)^2-2p+1.$$ The same holds for $r_+^{Sq^1}$ when $p=q$.
\end{Prop}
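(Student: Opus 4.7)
The lower bound $s_+^{Sq^1}(T(n,n)_{p,q}) \ge (p-q)^2 - 2p + 1$ is immediate from the containment $s_+^{Sq^1} \in \{s^{\mathbb{F}_2}, s^{\mathbb{F}_2}+2\}$ together with fact (b) cited in the introduction; the same is true for $r_+^{Sq^1}$. The substance of the proposition is therefore the matching upper bounds. Writing $s = s^{\mathbb{F}_2}(T(n,n)_{p,q})$, the upper bound $s_+^{Sq^1} \le s$ amounts, by the characterization at the end of Section~\ref{sbsec:s_refined_links}, to ruling out the existence of $x \in H^0(CKh_{BN}^{\ge s-1}(T(n,n)_{p,q};\mathbb{F}_2))$ with $j(x) = [\mathfrak s_o]$ and $p(x) \in \mathrm{im}(Sq^1)$; the $r_+^{Sq^1}$ statement at $p = q$ is analogous at filtration $s+1$, with $[\mathfrak s_o]\pm[\mathfrak s_{\bar o}]$ in place of $[\mathfrak s_o]$.

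The plan is to pass from $\mathbb{F}_2$ to integral coefficients. Since $Sq^1$ is the mod-2 Bockstein, its image in $Kh^{0, s-1}(T(n,n)_{p,q};\mathbb{F}_2)$ is precisely the mod-2 reduction of the 2-torsion subgroup of $Kh^{0, s-1}(T(n,n)_{p,q};\mathbb{Z})$, and by Shumakovitch's theorem every torsion element in integral Khovanov homology has order $2$. I would identify a canonical lift of the leading term of $[\mathfrak s_o]$ at filtration $s-1$—this comes from the Bar-Natan cycle at the oriented resolution of the standard diagram, which is well-defined integrally—and show that it represents a torsion-free integral class in $Kh^{0, s-1}(T(n,n)_{p,q};\mathbb{Z})$, hence is not in $\mathrm{im}(Sq^1)$. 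I would then verify that the modifications $x \mapsto x + y$ by $y \in \ker(j)$, which alter $p(x)$ only by elements arising from Turner-type Bar-Natan differentials, cannot bring the leading term into the Bockstein image either.

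The $r_+^{Sq^1}$ statement at $p = q$ is handled by the same method at bidegree $(0, s+1)$, exploiting the $\mathbb{Z}/2$ orientation-reversing symmetry of $T(n,n)_{p,p}$ which exchanges the two canonical classes and lets us focus on the symmetric combination $[\mathfrak s_o] + [\mathfrak s_{\bar o}]$, reducing the analysis to a single torsion-freeness check parallel to the previous step.

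The main obstacle is the integral computation: pinning down the 2-torsion of $Kh^{0, s-1}(T(n,n)_{p,q};\mathbb{Z})$ (respectively $Kh^{0, s+1}(T(n,n)_{p,p};\mathbb{Z})$) and confirming that the leading term of the canonical class is torsion-free. I would approach this either by induction on $q$ using Khovanov's unoriented skein exact triangle at a crossing between oppositely-oriented strands (reducing $T(n,n)_{p,q}$ to torus links with smaller $q$), or by appealing to known structural results on the integral Khovanov homology of torus links at extremal quantum gradings.
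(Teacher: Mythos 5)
Your overall strategy---interpret $Sq^1$ as the mod-$2$ Bockstein and reduce the question to $2$-torsion in integral Khovanov homology---is the same one the paper uses, but the write-up stops exactly where the content of the proof lies. The paper's argument is a two-line consequence of the computation, quoted from \cite[Theorem~1.1, Theorem~2.1]{ren2024lee}, that $Kh^{0,s(T)-1}(T;\Z)\cong\Z$ and $Kh^{1,s(T)-1}(T;\Z)=0$ for $T=T(n,n)_{p,q}$. Since the image of the integral Bockstein landing in $Kh^{0,s(T)-1}(T;\Z)$ is the subgroup of elements killed by $2$, which is trivial when that group is $\Z$, the map $Sq^1\colon Kh^{-1,s(T)-1}(T;\mathbb F_2)\to Kh^{0,s(T)-1}(T;\mathbb F_2)$ is identically zero. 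The condition ``$p(x)\in\mathrm{im}(Sq^1)$'' therefore degenerates to ``$p(x)=0$'', i.e.\ $s_+^{Sq^1}=s_+^0=s^{\mathbb F_2}$, with no need to single out a ``leading term'' of $[\mathfrak s_{\mathfrak o}]$, to track modifications by elements of $\ker(j)$, or to compare lifts. Those steps of your plan are in fact delicate: being the reduction of a non-torsion integral class does \emph{not} by itself exclude membership in $\mathrm{im}(Sq^1)$ (in $H=\Z\oplus\Z/2$ the non-torsion element $(2,1)$ has the same mod-$2$ reduction as the $2$-torsion element $(0,1)$), so your inference ``torsion-free lift, hence not in $\mathrm{im}(Sq^1)$'' has a gap unless you control the whole group---at which point the element-level analysis is superfluous.

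Two further concrete problems. First, the appeal to Shumakovitch's theorem that all torsion has order $2$ is wrong for the links at hand: that result concerns homologically thin links, whereas $T(n,n)$ is not thin and torus links are precisely where higher-order torsion occurs in Khovanov homology; fortunately the claim is also unnecessary, since $\mathrm{im}(Sq^1)$ is the reduction of $\ker(\times 2)$ regardless of the torsion orders present. Second, and decisively, you defer what you yourself call the main obstacle: the integral computation at bidegree $(0,s-1)$ (and in homological degree $-1$ for the $r_+^{Sq^1}$ statement at $p=q$). That computation \emph{is} the proof; without it the proposal is a plan rather than an argument. The paper obtains it by citation to \cite{ren2024lee}, and for $p=q$ uses the induction scheme there to see $Kh^{-1,*}(T)=0$, so that the relevant $Sq^1$ has zero domain---bypassing the symmetry argument you sketch.
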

\begin{proof}
	Write for short $T=T(n,n)_{p,q}$ and $s(\cdot)=s^{\mathbb F_2}(\cdot)$. By \cite[Theorem~1.1, Theorem~2.1]{ren2024lee}, we know $Kh^{0,s(T)-1}(T)=\Z$ and $Kh^{1,s(T)-1}(T)=0$. Since $Sq^1$ is the Bockstein homomorphism, it follows that $Sq^1\colon Kh^{-1,s(T)-1}(T)\to Kh^{0,s(T)-1}(T)$ is the zero map, hence $s_+^{Sq^1}(T)=s(T)$. When $p=q$, the induction scheme in \cite[Section~5]{ren2024lee} easily implies that $Kh^{-1,*}(T)=0$ for all $*$, hence $r_+^{Sq^1}(T)=s(T)$. The last equality is \cite[Theorem~1.1]{ren2024lee}.
\end{proof}

\subsection{\texorpdfstring{$s_+^{Sq^1}$}{s(Sq1)+} under disjoint union with nice links}
\begin{Prop}\label{prop:s_sq1_cntd_sum}
	Let $T$ be an oriented link in $S^3$ such that $$Sq^1\colon Kh^{i-1,s^{\mathbb F_2}(T)-1}(T;\mathbb F_2)\to Kh^{i,s^{\mathbb F_2}(T)-1}(T;\mathbb F_2)$$ is zero for $i=0,1$. Then, for any oriented link $L$ in $S^3$, we have $$s_+^{Sq^1}(L\sqcup T)=s_+^{Sq^1}(L)+s_+^{Sq^1}(T)-1.$$
\end{Prop}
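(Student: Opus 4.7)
The plan is to reduce the claim to an equivalence and then analyze it via the Künneth decomposition. First, the hypothesis on $T$ implies $s_+^{Sq^1}(T)=s^{\mathbb{F}_2}(T)$ by the same reasoning as in Proposition~\ref{prop:s_sq1_Tnn} (the relevant $Sq^1$ at $(-1,s(T)-1)\to(0,s(T)-1)$ vanishes). Combined with $s^{\mathbb{F}_2}(L\sqcup T)=s^{\mathbb{F}_2}(L)+s^{\mathbb{F}_2}(T)-1$ and $s_+^{Sq^1}-s^{\mathbb{F}_2}\in\{0,2\}$, the identity reduces to: $s_+^{Sq^1}(L)=s^{\mathbb{F}_2}(L)+2$ iff $s_+^{Sq^1}(L\sqcup T)=s^{\mathbb{F}_2}(L\sqcup T)+2$. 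Fix cycles $\tau_L\in CKh_{BN}^{\ge s(L)-1}(L;\mathbb{F}_2)$ and $\tau_T\in CKh_{BN}^{\ge s(T)-1}(T;\mathbb{F}_2)$ representing $[\mathfrak{s}_o(L)]$ and $[\mathfrak{s}_o(T)]$; by the Künneth identification $CKh_{BN}(L\sqcup T)\cong CKh_{BN}(L)\otimes CKh_{BN}(T)$, the product $\tau_L\otimes\tau_T$ is a cycle at filtration $\ge s(L\sqcup T)-1$ representing $[\mathfrak{s}_o(L\sqcup T)]$.

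A convenient reformulation is the following: for $M\in\{L,T,L\sqcup T\}$, the condition $s_+^{Sq^1}(M)=s(M)+2$ is equivalent to $p(\tau_M)\in\mathrm{im}(Sq^1|_M)+p(\ker j_M)\subset Kh^{0,s(M)-1}(M;\mathbb{F}_2)$. Here $p(\ker j_M)$ coincides with the image of all higher differentials in the Bar--Natan spectral sequence landing in $Kh^{0,s(M)-1}(M)$: any other witness $x$ with $j(x)=[\mathfrak{s}_o(M)]$ differs from $\tau_M$ by an element of $\ker j_M$, and the leading terms of such elements are precisely the higher spectral sequence boundaries (obtained by the standard cascade argument for $\zeta=d\mu$ with $\mu$ at lower filtration).

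For the forward direction, if $p(\tau_L)=Sq^1(y_L)+b_L$ with $b_L\in p(\ker j_L)$, then $p(\tau_L)\otimes p(\tau_T)=Sq^1(y_L)\otimes p(\tau_T)+b_L\otimes p(\tau_T)$; by the Cartan formula and the vanishing $Sq^1(p(\tau_T))=0$ (the $i=1$ hypothesis), the first summand equals $Sq^1(y_L\otimes p(\tau_T))\in\mathrm{im}(Sq^1|_{L\sqcup T})$, while the second summand lies in $p(\ker j_L)\otimes Kh^{0,s(T)-1}(T)\subset p(\ker j_{L\sqcup T})$ by the spectral-sequence Künneth.

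For the converse, suppose $p(\tau_L)\otimes p(\tau_T)=Sq^1(z)+b$ with $b\in p(\ker j_{L\sqcup T})$. Project both sides onto the \emph{central} Künneth component $C:=Kh^{0,s(L)-1}(L)\otimes Kh^{0,s(T)-1}(T)$ of $Kh^{0,s(L\sqcup T)-1}(L\sqcup T)$. By the Cartan formula together with both parts of the hypothesis on $T$, the central projection of $\mathrm{im}(Sq^1|_{L\sqcup T})$ equals $\mathrm{im}(Sq^1|_L)\otimes Kh^{0,s(T)-1}(T)$ (the would-be $T$-direction contributions vanish at $i=0$ and $i=1$ respectively). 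By the spectral-sequence Künneth for the BN SS over $\mathbb{F}_2$, the central projection of $p(\ker j_{L\sqcup T})$ is contained in $p(\ker j_L)\otimes Kh^{0,s(T)-1}(T)+Kh^{0,s(L)-1}(L)\otimes p(\ker j_T)$. Since $p(\tau_T)$ represents a nontrivial class in $E_\infty^{0,s(T)-1}(T)$, we may choose a linear functional $\psi\colon Kh^{0,s(T)-1}(T)\to\mathbb{F}_2$ with $\psi(p(\tau_T))=1$ and $\psi|_{p(\ker j_T)}=0$; applying $\mathrm{id}_L\otimes\psi$ to the central identity yields $p(\tau_L)\in\mathrm{im}(Sq^1|_L)+p(\ker j_L)$, equivalently $s_+^{Sq^1}(L)=s(L)+2$. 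The main obstacle is to rigorously establish the Künneth formula for the Bar--Natan spectral sequence over $\mathbb{F}_2$, which underlies the decomposition of $p(\ker j_{L\sqcup T})$ and hence the converse; the $Sq^1$ side is a direct consequence of Cartan and the hypothesis on $T$.
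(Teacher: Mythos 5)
Your outline follows essentially the same route as the paper: reduce to the equivalence ``$s_+^{Sq^1}(L\sqcup T)=s(L\sqcup T)+2$ iff $s_+^{Sq^1}(L)=s(L)+2$'', rewrite each side as a membership statement $p(\tau)\in\mathrm{im}(Sq^1)+p(\ker j)$, control $\mathrm{im}(Sq^1_{L\sqcup T})$ via the Cartan formula together with the two vanishing hypotheses on $T$, and extract the $L$-statement from the central K\"unneth component. Your forward direction is complete as written: $b_L\otimes p(\tau_T)$ is literally $p(z_L\otimes\tau_T)$ for some $z_L\in\ker j_L$ (a leading-term computation), so no spectral-sequence K\"unneth is needed there, and the $i=1$ hypothesis correctly supplies $Sq^1(p(\tau_T))=0$.

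The gap is exactly the one you flag: the containment of the central projection of $p(\ker j_{L\sqcup T})$ in $p(\ker j_L)\otimes Kh^{0,s(T)-1}(T)+Kh^{0,s(L)-1}(L)\otimes p(\ker j_T)$. This is true, but it is the technical heart of the proof, and invoking ``K\"unneth for the Bar--Natan spectral sequence'' as a black box does not quite deliver it: $p(\ker j)$ is the space of \emph{permanent boundaries} in bidegree $(0,s-1)$, and one must track how permanent boundaries of a filtered tensor product decompose. In particular there are cross terms of the form (target of an incoming differential on the $L$ side) tensor (source of an outgoing differential on the $T$ side); the second factor lies in $Kh^{0,s(T)-1}(T)$ but \emph{not} in $p(\ker j_T)$, so the containment is not formal and has to be checked to land in the first summand $p(\ker j_L)\otimes Kh^{0,s(T)-1}(T)$. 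The paper supplies the missing lemma via the structure theorem for filtered cochain complexes over a field: $CKh_{BN}(X)$ is filtered homotopy equivalent to $Kh_{BN}(X)\oplus(\oplus_\alpha E_{X,\alpha})$ with each $E_{X,\alpha}$ a two-term complex $\mathbb F\xrightarrow{=}\mathbb F$ whose differential strictly raises filtration; tensoring these decompositions for $L$ and $T$ lets one read off $p(\ker j_{L\sqcup T})$ summand by summand. The paper also streamlines the endgame by taking the central component to be $Kh^{0,s(L)-1}(L)\otimes gr^{s(T)-1}(Kh_{BN}^0(T))$ rather than your larger $Kh^{0,s(L)-1}(L)\otimes Kh^{0,s(T)-1}(T)$: then one only needs $x_T=gr^{s(T)-1}([\mathfrak s_{\mathfrak o_T}])\neq0$ to conclude, in place of your functional $\psi$ (which works equally well once the key containment is established). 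So: right architecture, one genuine unproved lemma, and the dots-and-arrows decomposition is the tool that closes it.
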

By arguments in the proof of Proposition~\ref{prop:s_sq1_Tnn} above, we know all torus links $T=T(n,n)_{p,q}$ satisfy the conditions in Proposition~\ref{prop:s_sq1_cntd_sum}.
\begin{proof}
Write for short $s(\cdot)=s^{\mathbb F_2}(\cdot)$. When $L=\emptyset$ or $T=\emptyset$ the statement is trivial. Below we assume that both $T,L$ are nonempty. Since $Sq^1\colon Kh^{-1,s(T)-1}(T;\mathbb F_2)\to Kh^{0,s(T)-1}(T;\mathbb F_2)$ is zero, we know $s_+^{Sq^1}(T)=s(T)$.
	
Since $s(L\sqcup T)=s(L)+s(T)-1$, it suffices to show that $s_+^{Sq^1}(L\sqcup T)=s(L\sqcup T)+2$ if and only if $s_+^{Sq^1}(L)=s(L)+2$. Although the ``if'' direction is simpler, we prove the two directions together.
	
We first observe that any filtered cochain complex $C^*$ over a field $\mathbb F$ is filtered isomorphic to a filtered complex of the form $$C^*\cong H^*(C)\oplus(\oplus_\alpha E_\alpha^*)\oplus(\oplus_\beta A_\beta^*)$$ where each $E_\alpha$ (resp. $A_\beta$) is a $2$-term filtered cochain complex $\mathbb F\xrightarrow{=}\mathbb F$ in some homological degree, which is not (resp. is) filtration-preserving. Thus, $C^*$ is filtered chain homotopy equivalent to $H^*(C)\oplus(\oplus_\alpha E_\alpha^*)$.
	
In particular, for each $X\in\{L,T\}$, we can choose such a filtered chain homotopy equivalence (which we simply denote as $=$) $CKh_{BN}^*(X)=Kh_{BN}^*(X)\oplus(\oplus_\alpha E_{X,\alpha}^*)$. Then $Kh^{*,*}(X;\mathbb F_2)=gr^*(Kh_{BN}^*(X))\oplus(\oplus_\alpha gr^*(E_{X,\alpha}^*))$, and $p_X(j_X^{-1}([\mathfrak s_{\mathfrak o_X}])\subset Kh^{0,s(X)-1}(X;\mathbb F_2)$ (see \eqref{eq:s_theta_maps} for notations; subscripts indicate dependence on the link) is the affine space over $$V_X:=\bigoplus_\alpha\mathrm{im}(H^0(E_{X,\alpha}^{*,\ge s(X)-1})\to gr^{s(X)-1}(E_{X,\alpha}^0))$$ containing $x_X:=gr^{s(X)-1}([\mathfrak s_{\mathfrak o_X}])$.
	
Similarly, for the disjoint union, from
\begin{align*}
CKh_{BN}^*(L\sqcup T)=Kh_{BN}^*(L)\otimes Kh_{BN}^*(T)&\,\oplus(\oplus_\alpha E_{L,\alpha}^*\otimes Kh_{BN}^*(T))\\\oplus&(\oplus_\beta Kh_{BN}^*(L)\otimes E_{T,\beta}^*)\oplus(\oplus_{\alpha,\beta}E_{L,\alpha}^*\otimes E_{T,\beta}^*)
\end{align*}
we know that $p_{L\sqcup T}(j_{L\sqcup T}^{-1}([\mathfrak s_{\mathfrak o_L}]\otimes[\mathfrak s_{\mathfrak o_T}]))\subset Kh^{0,s(L\sqcup T)-1}(L\sqcup T;\mathbb F_2)$ is the space containing $x_L\otimes x_T$ affine over $$V_{L\sqcup T}:=V_L\otimes gr^{s(T)-1}(Kh_{BN}^0(T))\oplus W$$ for some subspace $W\subset H_{skew}$, where 
\begin{align*}
H_{skew}:=&\,(\oplus_{(h,q)\ne(0,s(T)-1)}Kh^{-h,s(L\sqcup T)-1-q}(L;\mathbb F_2)\otimes Kh^{h,q}(T;\mathbb F_2))\\&\oplus Kh^{0,s(L)-1}(L;\mathbb F_2)\otimes(\oplus_\beta gr^{s(T)-1}(E_{T,\beta}^0)))
\end{align*}
is a complement of $Kh^{0,s(L)-1}(L;\mathbb F_2)\otimes gr^{s(T)-1}(Kh_{BN}^0(T))$ in $Kh^{0,s(L\sqcup T)-1}(L\sqcup T;\mathbb F_2)$.
	
On the other hand, since $Sq_{L\sqcup T}^1=Sq_L^1\otimes1+1\otimes Sq_T^1$, by our assumption on $T$, the image $\mathrm{im}(Sq_{L\sqcup T}^1)\subset Kh^{0,s(L\sqcup T)-1}(L\sqcup T;\mathbb F_2)$ is equal to $$\mathrm{im}(Sq_L^1\colon Kh^{-1,s(L)-1}(L;\mathbb F_2)\to Kh^{0,s(L)-1}(L;\mathbb F_2))\otimes gr^{s(T)-1}(Kh_{BN}^0(T))\oplus W'$$ for some subspace $W'\subset H_{skew}$. It follows that 
\begin{align*}
&\,s_+^{Sq^1}(L\sqcup T)=s(L\sqcup T)+2\\
\iff&\,x_L\otimes x_T\in V_{L\sqcup T}+\mathrm{im}(Sq_{L\sqcup T}^1)\\
\iff&\,x_L\in V_L+\mathrm{im}(Sq_L^1)\\
\iff&\,s_+^{Sq^1}(L)=s(L)+2.\hfill\qedhere
\end{align*}
\end{proof}

\subsection*{Acknowledgment}
We thank Ciprian Manolescu and Robert Lipshitz for helpful discussions and correspondence.

\printbibliography

\end{document}